\newtheorem{theorem}{Theorem}[section]
\newtheorem{lemma}[theorem]{Lemma}
\newtheorem{corollary}[theorem]{Corollary}
\theoremstyle{definition}
\theoremstyle{remark}
\numberwithin{equation}{section}
\begin{document}
\title[Parity distribution and divisibility of Mex-related partition functions]{Parity distribution and divisibility of Mex-related partition functions}

\author[S. Bhattacharyya]{Subhrajyoti Bhattacharyya}
\address{Department of Mathematics, National Institute of Technology Agartala, Tripura, India, PIN- 799046}
\email{jyotisubhra785@gmail.com}

\author[R. Barman]{Rupam Barman}
\address{Department of Mathematics, Indian Institute of Technology Guwahati, Assam, India, PIN- 781039}
\email{rupam@iitg.ac.in}

\author[A. Singh]{Ajit Singh}
\address{Department of Mathematics, Indian Institute of Technology Guwahati, Assam, India, PIN- 781039}
\email{ajit18@iitg.ac.in}

\author[A. K. Saha]{Apu Kumar Saha}
\address{Department of Mathematics, National Institute of Technology Agartala, Tripura, India, PIN- 799046}
\email{apusaha\_nita@yahoo.co.in}
	
\date{March 6, 2023}


\subjclass{Primary 05A17, 11P83}

\keywords{minimal excludant; mex function; integer partition; distribution}

\dedicatory{}

\begin{abstract}
	Andrews and Newman introduced the mex-function $\text{mex}_{A,a}(\lambda)$ for an integer partition $\lambda$ of a positive integer $n$ as the smallest positive integer congruent to $a$ modulo $A$ that is not a part of $\lambda$. They then defined $p_{A,a}(n)$ to be the number of partitions $\lambda$ of $n$ satisfying $\text{mex}_{A,a}(\lambda)\equiv a\pmod{2A}$. They found the generating function for $p_{t,t}(n)$ and $p_{2t,t}(n)$ for any positive integer $t$, and studied their arithmetic properties for some small values of $t$. In this article, we study the partition function $p_{mt,t}(n)$ for all positive integers $m$ and $t$. We show that for sufficiently large $X$, the number of all positive integer $n\leq X$ such that $p_{mt,t}(n)$ is an even number is at least $\mathcal{O}(\sqrt{X/3})$ for all positive integers $m$ and $t$. We also prove that for sufficiently large $X$, the number of all positive integer $n\leq X$ such that $p_{mp,p}(n)$ is an odd number is at least $\mathcal{O}(\log \log X)$ for all $m\not \equiv 0\pmod{3}$ and all primes $p\equiv 1\pmod{3}$. Finally, we establish identities connecting the ordinary partition function to $p_{mt,t}(n)$.
	\end{abstract}
\maketitle
\section{Introduction} 
The minimal excludant or ``mex" function on a set $S$ of positive integers is defined as the least positive integer not in $S$. Andrews and Newman \cite{Andrews-Newman} recently generalized this function to integer partitions. A partition of a non-negative integer $n$ is a non-increasing sequence of positive integers whose sum is $n$. Given a partition $\lambda$ of $n$, they defined the mex-function $\text{mex}_{A,a}(\lambda)$ to be the smallest positive integer congruent to $a$ modulo $A$ that is not part of $\lambda$. Using $\text{mex}_{A,a}(\lambda)$, they next defined the function $p_{A,a}(n)$ as the number of partitions $\lambda$ of $n$ satisfying 
\begin{align*}
\text{mex}_{A,a}(\lambda)\equiv a \pmod{2A}.
\end{align*}
For example, consider $n = 5$, $A = 3$, and $a = 1$. In Table \ref{Table-1}, we list the
seven partitions $\lambda$ of $5$ and the corresponding values of $\text{mex}_{3,1}(\lambda)$ for each $\lambda$.
\begin{table}[ht]
\begin{center}
\begin{tabular}{|c|c|}
\hline
Partition $\lambda$&$\text{mex}_{3,1}(\lambda)$\\
\hline 
$5$&$1$\\
$4+1$&7\\
$3+2$&1\\
$3+1+1$&4\\
$2+2+1$&4\\
$2+1+1+1$&4\\
$1+1+1+1+1$&4\\
\hline 
	\end{tabular}
\caption{Values of $\text{mex}_{3,1}(\lambda)$}
\label{Table-1}
\end{center}
\end{table}
We see that three of the partitions of $5$ satisfy $\text{mex}_{3, 1}(\lambda)\equiv 1\pmod{6}$. Therefore, $p_{3, 1}(5)=3$.
Andrews and Newman \cite[Lemma 9]{Andrews-Newman} proved that the generating function for $p_{t,t}(n)$ is given by
\begin{align}\label{gen-fun}
\sum_{n=0}^{\infty}p_{t,t}(n)q^n=\frac{1}{(q; q)_{\infty}}\sum_{n=0}^{\infty}(-1)^n q^{tn(n+1)/2}
\end{align}
and the generating function for $p_{2t,t}(n)$ is given by
\begin{align}\label{gen-fun1}
\sum_{n=0}^{\infty}p_{2t,t}(n)q^n=\frac{1}{(q; q)_{\infty}}\sum_{n=0}^{\infty}(-1)^n q^{tn^2},
\end{align}
where $\displaystyle (a; q)_{\infty}:= \prod_{j=0}^{\infty}(1-aq^j)$. 
\par Using the generating functions and elementary $q$-series manipulations, Andrews and Newman \cite{Andrews-Newman} proved that $p_{1, 1}(n)$ equals the number of partitions of $n$ with non-negative crank and $p_{3,3}(n)$ equals the number of partitions of $n$ with rank $\geq -1$. They also proved that $p_{2,1}(n)$ is equal to the number of partitions of $n$ into even parts. They further proved that $p_{4,2}(n) - p_o(n)$ equals the number of partitions of $n$ into parts congruent to
$\pm 4, \pm 6, \pm 8, \pm 10$ modulo $32$ and $p_{6,3}(n) - p_o(n)$ equals the number of partitions of $n$ into parts congruent to $\pm 2, \pm 4, \pm 5, \pm 6, \pm 7, \pm 8$ modulo $24$, where $p_o(n)$ denotes the number of partitions of $n$ into odd parts. 
\par In a recent paper \cite{BS2}, the second and the third author have established identities connecting the ordinary partition function to $p_{t,t}(n)$ and $p_{2t,t}(n)$; and the Andrews' singular overpartition function to $p_{t,t}(n)$ for all $t\geq 1$. In another recent paper \cite{BS1}, the second and the third author have proved that $p_{2^{\alpha},2^{\alpha}}(n)$ and $p_{3\cdot2^{\alpha}, 3\cdot2^{\alpha}}(n)$ are almost always even for all $\alpha\geq 1$ using the theory of modular forms and $\eta$-quotients. 
\par 
In this article, we study the partition function $p_{mt,t}(n)$ for all positive integers $m$ and $t$. We first find the generating function, and then study the distribution of $p_{mt,t}(n)$ modulo $2$. In the following theorem, we obtain quantitative estimate for the distribution of even values of $p_{mt,t}(n)$ for all  positive integers $t$ and $m$.
\begin{theorem}\label{thm2}
	Let $t$ and $m$ be positive integers. Then, for large $X$, we have 
	\begin{align}\label{bound2}
	\#\left\lbrace n\leq X : p_{mt,t}\left( n\right)\equiv 0\pmod{2} \right\rbrace \gg \sqrt{X/3}.
	\end{align}
\end{theorem}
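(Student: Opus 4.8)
The plan is to compute the generating function for $p_{mt,t}(n)$, reduce it modulo $2$, clear the denominator, and then play the sparsity of a resulting theta-type series against the regular spacing of the generalized pentagonal numbers.

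\textbf{Step 1 (generating function).} Adapting the combinatorial sieve of Andrews and Newman behind \eqref{gen-fun}--\eqref{gen-fun1}, fix $A=mt$ and $a=t$. The positive integers congruent to $t$ modulo $mt$ are exactly $b_j:=t(1+jm)$, $j\ge 0$, and $b_j\equiv t\pmod{2mt}$ precisely when $j$ is even. Writing $\frac{1}{1-q^{b_j}}=1+\frac{q^{b_j}}{1-q^{b_j}}$ for each such part and classifying a partition counted by $p_{mt,t}(n)$ by the least index $k$ (necessarily even) for which $b_k$ is absent, the generating function becomes $\frac{1}{(q;q)_\infty}$ times the sieved sum $\sum_{k\text{ even}}q^{e_k}\bigl(1-q^{b_k}\bigr)$, where $e_k:=\sum_{j=0}^{k-1}b_j=\frac{tk(mk-m+2)}{2}$; since $e_{k+1}-e_k=b_k$, this sum rearranges to $\sum_{k\ge 0}(-1)^kq^{e_k}$, so that
\begin{align*}
\sum_{n\ge 0}p_{mt,t}(n)\,q^n=\frac{1}{(q;q)_\infty}\sum_{k\ge 0}(-1)^k q^{tk(mk-m+2)/2}.
\end{align*}
Specializing $m=1$ and $m=2$ recovers \eqref{gen-fun} and \eqref{gen-fun1}, a reassuring check.

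\textbf{Step 2 (reduction modulo $2$).} Reducing modulo $2$ and multiplying through by $(q;q)_\infty$, Euler's pentagonal number theorem gives
\begin{align*}
\Bigl(\sum_{n\ge 0}p_{mt,t}(n)q^n\Bigr)\sum_{j\in\mathbb{Z}}q^{j(3j-1)/2}\equiv\sum_{k\ge 0}q^{e_k}\pmod 2 .
\end{align*}
The right-hand side has all coefficients in $\{0,1\}$ and is supported on the strictly increasing sequence $e_0<e_1<e_2<\cdots$, hence on at most $O(\sqrt X)$ integers up to $X$.

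\textbf{Step 3 (counting).} Fix large $X$, put $Z:=\{\,n\le X:\ p_{mt,t}(n)\equiv 0\pmod 2\,\}$ and $C:=\{e_k:k\ge 0\}$, let $G$ be the set of generalized pentagonal numbers, and set $P(N):=\#\bigl(G\cap[0,N]\bigr)$. Extracting the coefficient of $q^N$ in the congruence above, for every $N\le X$ we obtain
\begin{align*}
P(N)-\#\{\,z\in Z:\ N-z\in G\,\}\equiv[\,N\in C\,]\pmod 2 .
\end{align*}
So $P(N)$ odd forces $N\in C$ or $\#\{z\in Z:N-z\in G\}\ge 1$, whence
\begin{align*}
\#\{\,N\le X:\ P(N)\text{ odd}\,\}\ \le\ \#\bigl(C\cap[0,X]\bigr)+\sum_{z\in Z}P(X-z)\ \le\ O(\sqrt X)+|Z|\cdot P(X).
\end{align*}
Since $P(X)=O(\sqrt X)$, once it is known that $\#\{N\le X:P(N)\text{ odd}\}\gg X$ we deduce $|Z|\gg\sqrt X$, which is exactly \eqref{bound2} because $\sqrt{X/3}\asymp\sqrt X$.

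\textbf{Remaining input and the main obstacle.} Everything rests on the claim $\#\{N\le X:P(N)\text{ odd}\}\gg X$. Here $P$ is a nondecreasing step function jumping by exactly $1$ at each element of $G$, so the $N\le X$ with $P(N)$ odd form a union of runs whose lengths are the successive gaps between consecutive generalized pentagonal numbers; since the $i$-th smallest element of $G$ is $\asymp i^{2}$, these gaps grow like $\asymp i$, consecutive gaps stay within a bounded ratio of each other, and so the odd-indexed runs occupy a positive proportion of $[0,X]$ (in fact density $\tfrac{2}{3}$), while the companion bound $P(X)=O(\sqrt X)$ is immediate. I expect this elementary-but-careful analysis of the spacing of the generalized pentagonal numbers, rather than the algebra, to be where the real work sits (and the particular shape $\sqrt{X/3}$ in \eqref{bound2} just reflects how one chooses to bookkeep the constants); Step 1 is routine once one spots the rearrangement $\sum_{k\text{ even}}q^{e_k}(1-q^{b_k})=\sum_{k\ge 0}(-1)^kq^{e_k}$.
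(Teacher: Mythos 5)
Your proposal is correct and follows essentially the same route as the paper: the generating function of Lemma \ref{lem1} reduced modulo $2$ against Euler's pentagonal number theorem, the observation that $P(N)$ (the paper's $|\mathcal{N}_N|$) is odd on a positive proportion of $N\le X$ while the theta-series side is supported on $O(\sqrt X)$ integers, and the double count in which each even value $z$ is used at most $P(X)=O(\sqrt X)$ times. Your Step 3 merely packages the paper's pair-counting as an upper bound on $\#\{N\le X: P(N)\text{ odd}\}$, and your "remaining input" (runs of lengths $2k-1$ and $k$ alternating, giving density $2/3$) is exactly the paper's computation with the intervals $B_k=[u_{-(k-1)},u_k)$.
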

 Next, we obtain quantitative estimate for the distribution of odd values of $p_{mt,t}(n)$ for certain positive integers $t$ and $m$.
 \begin{theorem}\label{thm5}
 	Let $m$ be a positive integer and $p$ be a prime such that $m \not \equiv 0 \pmod{3}$ and $p\equiv 1\pmod{3}$. Then, for large $X$, we have
 	\begin{align*}
 	\# \{ n \leq X : p_{mp,p}(n)\equiv 1\pmod{2} \} \geq \beta \log\log X,
 	\end{align*} 
 	where $\beta > 0$ is a constant.
 \end{theorem}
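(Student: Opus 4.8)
My plan is to pass to the generating function modulo $2$, reduce the non-vanishing question to an elementary statement about sparse sets of exponents, and obtain the quantitative count via modular forms.

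\emph{Step 1 (reduction mod $2$).} Specialising to $t=p$ the generating function for $p_{mt,t}(n)$ established earlier (which generalises \eqref{gen-fun} and \eqref{gen-fun1}, namely $\sum_n p_{mt,t}(n)q^n=\frac{1}{(q;q)_\infty}\sum_{k\ge0}(-1)^kq^{tk(mk-m+2)/2}$), and using $(-1)^k\equiv1\pmod2$,
\begin{align*}
\sum_{n\ge0}p_{mp,p}(n)\,q^n\equiv\frac{1}{(q;q)_\infty}\sum_{k\ge0}q^{S_k}\pmod 2,\qquad S_k:=\frac{pk(mk-m+2)}{2},
\end{align*}
so $p_{mp,p}(n)$ is odd exactly when $\sum_{k\ge0,\,S_k\le n}p(n-S_k)$ is odd. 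Moreover $p_{mp,p}(0)=1$, since the empty partition has $\mathrm{mex}_{mp,p}(\varnothing)=p\equiv p\pmod{2mp}$; in particular the left-hand side is a nonzero element of $\mathbb{F}_2[[q]]$.

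\emph{Step 2 (infinitely many odd values; where the hypotheses enter).} Put $\Theta(q)=\sum_{k\ge0}q^{S_k}$, so $\Theta(q)\equiv(q;q)_\infty\sum_n p_{mp,p}(n)q^n\pmod2$. If $p_{mp,p}(n)$ were even for all large $n$, then $\Theta(q)\equiv P(q)(q;q)_\infty\pmod2$ for a polynomial $P$, and comparing coefficients of $q^{S_k}$ for large $k$ would force $S_k-a$ to be a generalized pentagonal number for some $a$ in the finite support of $P\bmod2$. But $S_k-a$ is pentagonal only when $(k,j)$ lies on one of finitely many conics $pmk^2-3j^2+(\text{linear in }k,j)=0$; since $m\not\equiv0\pmod3$ and $p\equiv1\pmod3$ give $3\nmid pm$, so $3pm$ is not a perfect square, these conics are nondegenerate hyperbolae whose integer points grow geometrically, whence only $O(\log K)$ of the indices $k\le K$ could be covered — a contradiction. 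This is exactly the step consuming the congruence conditions on $m$ and $p$.

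\emph{Step 3 (quantitative bound) and the main obstacle.} To upgrade ``infinitely often'' to ``$\ge\beta\log\log X$'', I would rewrite $\sum_n p_{mp,p}(n)q^n\bmod2$ as the reduction of a genuine modular object: using $(q;q)_\infty^2\equiv(q^2;q^2)_\infty$, Euler's pentagonal number theorem, and Jacobi's identity $(q;q)_\infty^3\equiv\sum_{j\ge0}q^{j(j+1)/2}\pmod2$, and then invoke the standard estimate used in this circle of problems — that a nonzero modular form modulo $2$ on a congruence subgroup has $\gg\log\log X$ odd Fourier coefficients up to $X$ (in the spirit of Serre's lacunarity theorem and the refinements applied to $p(n)$ and related functions). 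Translating back through the finitely many residue classes and the auxiliary power of $q$ then yields Theorem~\ref{thm5}. The genuine difficulty is here: the inner sum $\Theta(q)=\sum_{k\ge0}q^{S_k}$ is only a \emph{half} theta series, hence not on its own the reduction of an $\eta$-quotient, so the crux is to package it into a bona fide modular form — e.g.\ by adjoining the companion half $\sum_{k\ge0}q^{S_{-k}}$, which equals $(q;q)_\infty\sum_n p_{mp,(m-1)p}(n)q^n$ modulo $2$, so that the combined series becomes a weight-$\tfrac12$ theta series over an arithmetic progression — while tracking the resulting level and character (again governed by the arithmetic of $m$ and $p$) and then separating the estimate back down to $p_{mp,p}$ alone without losing more than a constant factor.
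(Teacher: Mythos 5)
Your Step 1 matches the paper's reduction, and your Step 2 is a plausible (if heavier) alternative route to the \emph{qualitative} statement that $p_{mp,p}(n)$ is odd infinitely often: the paper instead shows elementarily (its Lemma \ref{lemma-new}) that for $s\equiv 2\pmod 3$ the number $s(3s-1)$ is never of the form $(mk^2-(m-2)k)p$, which plays the same role as your nondegenerate-conic observation and likewise consumes only $3\nmid pm$. But the theorem is a quantitative statement, and your Step 3 --- the only place the bound $\beta\log\log X$ could come from --- is not a proof: you yourself label the packaging of the half theta series $\sum_{k\ge 0}q^{S_k}$ into a genuine modular object, and the subsequent ``separation back down to $p_{mp,p}$ alone,'' as the unresolved crux. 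In addition, the black box you want to invoke (``a nonzero modular form mod $2$ has $\gg\log\log X$ odd coefficients up to $X$'') is not an off-the-shelf theorem in the generality you need, particularly for half-integral-weight theta-type objects, and adjoining the companion half $\sum_{k}q^{S_{-k}}$ changes the function whose parity you are counting, so even granting the black box you would only control the parity of $p_{mp,p}(n)+p_{mp,(m-1)p}(n)$, not of $p_{mp,p}(n)$. As written, the theorem is not proved.

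The paper avoids all of this with a purely elementary, localized argument in the style of Kolberg. From the pentagonal recurrence mod $2$ (Lemma \ref{lem3}) one shows (Lemma \ref{lem5}) that if $r(3r-1)$ is not of the form $(mk^2-(m-2)k)t$, then the \emph{finite window} $[2r-1,\tfrac{r(3r-1)}{2}]$ must already contain an $n$ with $p_{mt,t}(n)$ odd --- this localization is exactly what your global contradiction in Step 2 lacks. One then iterates $a_0=s\equiv 2\pmod 3$, $a_{k+1}=\tfrac{1}{2}a_k(3a_k-1)$ (which preserves the residue $2$ mod $3$), obtaining disjoint windows each contributing one odd value; since $a_k\le 2^{2^k}$, there are $\gg\log\log X$ such windows below $X$. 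If you want to salvage your approach, the fix is to make your Step 2 argument local to an interval rather than asymptotic, which is precisely what the paper's Lemma \ref{lem5} accomplishes; the modular-forms machinery of Step 3 can then be discarded entirely.
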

Finally, we establish identities connecting the ordinary partition function $p(n)$ to $p_{mt,t}(n)$ for all positive integers $t$ and $m$ in Section \ref{final-part}. Using these identities, we prove that the Ramanujan's famous congruences for $p(n)$ are also satisfied by $p_{mt,t}(n)$.
\section{Parity distribution of $p_{mt,t}(n)$} 
In this section, we first prove the generating functions of $p_{mt,t}(n)$  for all positive integers $t$ and $m$.
\begin{lemma}\label{lem1}
 Let $t$ and $m$ be positive integers. Then
 \begin{align}\label{lem1.1}
\sum_{n=0}^{\infty}p_{mt,t}(n)q^{n}= \frac{1}{(q;q)_{\infty}}\sum_{n=0}^{\infty}(-1)^{n} q^{\frac{1}{2}(mn^{2}-(m-2)n)t}.
\end{align}
 \end{lemma}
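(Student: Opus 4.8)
The plan is to recover \eqref{lem1.1} by reading off the defining condition on $\text{mex}_{mt,t}$ combinatorially and then telescoping a family of generating functions, in the spirit of \cite[Lemma 9]{Andrews-Newman}. \textbf{Step 1 (translating the mex condition).} The positive integers congruent to $t$ modulo $mt$ are precisely $t(1+jm)$ for $j=0,1,2,\dots$. For a partition $\lambda$ let $r=r(\lambda)$ be the least $j\ge 0$ for which $t(1+jm)$ is not a part of $\lambda$; this exists because $\lambda$ has finitely many parts. Then $\text{mex}_{mt,t}(\lambda)=t(1+rm)$, and $t(1+rm)\equiv t\pmod{2mt}$ if and only if $rm\equiv 0\pmod{2m}$, i.e.\ if and only if $r$ is even. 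Hence $p_{mt,t}(n)$ counts exactly those partitions $\lambda$ of $n$ with $r(\lambda)$ even. (When $m=1$ the progression is the set of all multiples of $t$, and the condition says $\text{mex}$ is an odd multiple of $t$, consistent with \eqref{gen-fun}.)

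\textbf{Step 2 (a prefix family).} For $k\ge 0$ let $B_k(n)$ denote the number of partitions of $n$ that contain each of $t,\,t(1+m),\,\dots,\,t(1+(k-1)m)$ as a part, with no condition imposed on $t(1+km)$ or beyond. Removing one copy of each of these $k$ prescribed parts is a bijection onto the set of all partitions of $n-S_k$, where
\[
S_k=\sum_{j=0}^{k-1}t(1+jm)=\tfrac{t}{2}\bigl(mk^{2}-(m-2)k\bigr),
\]
so that $\displaystyle\sum_{n\ge 0}B_k(n)q^{n}=\frac{q^{S_k}}{(q;q)_{\infty}}$.

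\textbf{Step 3 (telescoping).} The number of partitions $\lambda$ of $n$ with $r(\lambda)=k$ exactly is $B_k(n)-B_{k+1}(n)$, so by Step 1,
\[
p_{mt,t}(n)=\sum_{\substack{k\ge 0\\ k\ \mathrm{even}}}\bigl(B_k(n)-B_{k+1}(n)\bigr)=\sum_{k\ge 0}(-1)^{k}B_k(n).
\]
For each fixed $n$ only the finitely many $k$ with $S_k\le n$ contribute, so summing against $q^{n}$ and interchanging the two sums is legitimate; doing so and inserting Step 2 gives $\sum_{n\ge 0}p_{mt,t}(n)q^{n}=(q;q)_{\infty}^{-1}\sum_{k\ge 0}(-1)^{k}q^{\frac{t}{2}(mk^{2}-(m-2)k)}$, which is \eqref{lem1.1}.

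The only steps needing genuine care are Step 1, where one must correctly unwind the double congruence into the parity of $r(\lambda)$ (including the degenerate case $m=1$), and the rearrangement in Step 3, whose legitimacy follows immediately from $S_k\to\infty$. As a consistency check, $m=1$ produces the exponent $tk(k+1)/2$ of \eqref{gen-fun} and $m=2$ produces $tk^{2}$ of \eqref{gen-fun1}.
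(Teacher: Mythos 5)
Your proof is correct and is in substance the same argument as the paper's: the paper pairs the terms of index $2n$ and $2n+1$ in the alternating sum and recognizes $q^{S_{2n}}(1-q^{(2mn+1)t})/(q;q)_\infty$ as the generating function for partitions whose mex equals $t(1+2nm)$, which is exactly your telescoped identity $p_{mt,t}(n)=\sum_{k}(-1)^k B_k(n)$ read in the other direction. Your write-up is, if anything, slightly more careful than the paper's about why the alternating sum counts precisely the partitions with mex at an even index of the progression.
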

 \begin{proof}
 	We have 
 \begin{align*}
&\frac{1}{(q;q)_{\infty}}\sum_{n=0}^{\infty}(-1)^{n} q^{\frac{1}{2}(mn^{2}-(m-2)n)t}\\
&= \frac{1}{(q;q)_{\infty}}\sum_{n=0}^{\infty} ( q^{\frac{1}{2}(4n^{2}m -2n(m-2))t} - q^{\frac{1}{2}((4n^{2}+4n+1)m - (2n+1)(m-2))}) \\
&= \frac{1}{(q;q)_{\infty}}\sum_{n=0}^{\infty} q^{\frac{1}{2}(4n^{2}m -2n(m-2))t}(1-q^{2mn+1)t})\\
&=\frac{1}{(q;q)_{\infty}}\sum_{n=0}^{\infty} q^{t+(m+1)t+(2m+1)t+\cdots+((2n-1)m+1)t}(1-q^{(2mn+1)t})\\
&= \sum_{n=0}^{\infty}\frac{ q^{t+(m+1)t+(2m+1)t+\cdots+((2n-1)m+1)t}}{\prod_{r=1, r \not= t(2mn+1)}^{\infty}(1-q^{r})}.
\end{align*}
The last expression is clearly the generating function for $p_{mt,t}(n)$.
 \end{proof} 
We readily obtain \eqref{gen-fun} and \eqref{gen-fun1} by taking $m=1, 2$ in Lemma \ref{lem1}, respectively.
 \subsection{Parity distribution of $p_{mt,t}(n)$: even case }
  In this section, we prove Theorem \ref{thm2}. In \cite{ahlgren}, Ahlgren found quantitative estimates for the distribution of parity of the ordinary partition function $p(n)$ in arithmetic progression. Our proof of Theorem \ref{thm2} is inspired by Ahlgren \cite{ahlgren}.  
\begin{proof}[Proof of Theorem \ref{thm2}]
We first recall, Euler's Pentagonal Number Theorem \cite[(1.3.18)]{berndt},
\begin{align}\label{Pentagonal}
(q; q)_{\infty}=\sum_{n\in \mathbb{Z}}^{}(-1)^{n}q^{\frac{n}{2}(3n-1)}.
\end{align}
Employing \eqref{Pentagonal} in \eqref{lem1.1}, and then taking modulo $2$ we obtain
\begin{align}\label{thm2.1}
\sum_{n=0}^{\infty}q^{\frac{1}{2}(mn^{2}-(m-2)n)t} \equiv \sum_{n \in \mathbb{Z}} q^{\frac{n(3n-1)}{2}} \sum_{n=0}^{\infty}p_{mt,t}(n)q^{n}\pmod{2}.
\end{align}
We define $u_{k} = \frac{1}{2}k(3k-1)$ for all $k \in \mathbb{Z}$. Then, for every $n$ we define 
\begin{align*}
\mathcal{N}_{n} :=  \{ n- u_{k} : 0 \leq u_{k} \leq n\text{~for some~}k \in \mathbb{Z} \}.
\end{align*}
Clearly for
\begin{align*}
\sum_{n=0}^{\infty}b(n)q^{n}:=\sum_{n=0}^{\infty} q^{\frac{1}{2}(mn^{2}-(m-2)n)t},
\end{align*}
we have
\begin{align}\label{b6ox1}
\#\left\lbrace n\leq X : b(n)\ \text{is odd}\right\rbrace=o(X).
\end{align}
Now, comparing the coefficients of $q^{n}$ on both sides of \eqref{thm2.1}, we obtain
\begin{equation}\label{thm2.2}
b(n)\equiv\sum_{s\in\mathcal{N}_n}p_{mt,t}(s) \pmod{2}.
\end{equation}
Note that for $k\geq1$, if $u_{-(k-1)}\leq n< u_k$, then $|\mathcal{N}_n|=2k-1$ and if $u_{k}\leq n< u_{-k}$, then $|\mathcal{N}_n|=2k$. 
Thus, $|\mathcal{N}_n|$ is odd if and only if $n$ is in an interval of the form $B_{k}:=\left[u_{-(k-1)},u_{k}\right)$. There exists a positive constant $D$ such that $B_{k}\subset\left[0,X \right] $, $0\leq k\leq D\sqrt{X}$, for large $X$. 
The fact that the length of $B_{k}$ is $\gg k$ implies
\begin{equation*}
\#\left\lbrace n\leq X :n\in B_k\  \text{for some}\  k\right\rbrace \gg \sum_{k=0}^{D\sqrt{X}}k\gg X.
\end{equation*}
Therefore, $\#\left\lbrace n\leq X :|\mathcal{N}_n|\ \text{is odd}\  \right\rbrace \gg X$, and together with \eqref{b6ox1} we conclude that 
\begin{equation*}
\#\left\lbrace n\leq X :|\mathcal{N}_n|\ \text{is odd,}\ b(n)\ \text{is even} \right\rbrace \gg X.
\end{equation*}
It is clear from \eqref{thm2.2} that for every $n\in \left\lbrace n\leq X :n\in B_k\  \text{for some}\  k\right\rbrace$, $p_{mt,t}(s)$ is even for some $s\in \mathcal{N}_n$. This gives
\begin{equation*}
\#\left\lbrace(n,s): n\leq X,\ s\in \mathcal{N}_n,\ p_{mt,t}(s)\ \text{is even} \right\rbrace \gg X.
\end{equation*}
We now wish to count $N_{s, X}:=\#\left\lbrace n\leq X: s\in \mathcal{N}_n \right\rbrace$. For fixed $s$, $N_{s, X}$ is not more than $\#\left\lbrace k\in\mathbb{Z}: 0\leq u_k\leq X \right\rbrace$, and this number is clearly $\ll \sqrt{3X}$. 
Therefore, $N_{s, X} \ll \sqrt{3X}$, and we arrive at \eqref{bound2}. This completes the proof of the theorem.
\end{proof}
\subsection{Parity distribution of $p_{mt,t}(n)$: odd case }
In this section, we prove Theorem \ref{thm5}. In order to prove Theorem \ref{thm5}, we first prove the following lemmas. Our proof is inspired by Kolberg \cite{kolberg} and Ray \cite{ray}.
\begin{lemma}\label{lem3}
Let $t$ and $m$ be positive integers. Then, for any positive integer $n$, we have 
\begin{align*}
&\sum_{s=0}^{\infty}{p_{mt,t}\left(n-\frac{s(3s-1)}{2}\right)} + \sum_{s=1}^{\infty}{p_{mt,t}\left(n-\frac{s(3s+1)}{2}\right)}\\
&\equiv 
\begin{cases}
1 \pmod{2}, & \mbox{if $n=\frac{1}{2}((mk^{2} -(m-2)k)t)$ for some $k \in \mathbb{N}$};\\
0\pmod{2}, & \mbox{otherwise}.\nonumber
\end{cases}
\end{align*}
\end{lemma}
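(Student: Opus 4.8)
The plan is to obtain the congruence directly from Lemma~\ref{lem1} by clearing the denominator $(q;q)_\infty$ and reading off the coefficient of $q^n$; indeed, the required identity is essentially the coefficient comparison in the congruence \eqref{thm2.1} already established in the proof of Theorem~\ref{thm2}, together with one elementary observation about the exponents on the right.

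First I would rewrite Euler's Pentagonal Number Theorem \eqref{Pentagonal} by splitting the sum over $\mathbb{Z}$ into its nonnegative and negative ranges. Putting $k=-s$ with $s\geq1$ in the negative range and using $\tfrac{(-s)(3(-s)-1)}{2}=\tfrac{s(3s+1)}{2}$ gives
\begin{align*}
(q;q)_\infty=\sum_{s=0}^{\infty}(-1)^{s}q^{s(3s-1)/2}+\sum_{s=1}^{\infty}(-1)^{s}q^{s(3s+1)/2}.
\end{align*}
Multiplying \eqref{lem1.1} through by $(q;q)_\infty$, reducing every coefficient modulo $2$ so that each sign $(-1)^s$ and $(-1)^n$ becomes $1$, and then extracting the coefficient of $q^n$ on both sides (with the usual convention $p_{mt,t}(j)=0$ for $j<0$, so that the sums over $s$ are in fact finite) produces
\begin{align*}
\sum_{s=0}^{\infty}p_{mt,t}\!\left(n-\tfrac{s(3s-1)}{2}\right)+\sum_{s=1}^{\infty}p_{mt,t}\!\left(n-\tfrac{s(3s+1)}{2}\right)\equiv [q^{n}]\sum_{k=0}^{\infty}q^{\frac{1}{2}(mk^{2}-(m-2)k)t}\pmod{2}.
\end{align*}

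It then remains only to evaluate the right-hand side, and the sole point requiring care is that it is a genuine $0$--$1$ indicator rather than an arbitrary residue modulo $2$. Set $f(k)=\tfrac{t}{2}\bigl(mk^{2}-(m-2)k\bigr)=\tfrac{t}{2}k\bigl(mk-(m-2)\bigr)$, which is an integer for every $k\in\mathbb{Z}$; then $[q^{n}]\sum_{k\geq0}q^{f(k)}$ counts the number of $k\geq0$ with $f(k)=n$. Since $f'(k)=\tfrac{t}{2}\bigl(2mk-(m-2)\bigr)$ has its unique real zero at $k=\tfrac12-\tfrac1m<1$ for every $m\geq1$, the function $f$ is strictly increasing on $\{0,1,2,\dots\}$ and hence injective there, so that count is $0$ or $1$. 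Therefore the right-hand side is $1$ exactly when $n=\tfrac12(mk^{2}-(m-2)k)t$ for some $k\in\mathbb{N}$ and $0$ otherwise (for $n>0$ one necessarily has $k\geq1$, as $f(0)=0$), which is the claimed dichotomy. I do not anticipate any real obstacle here: the argument is entirely routine, and one could equivalently just invoke \eqref{thm2.1} directly and then supply only the injectivity remark and the coefficient comparison.
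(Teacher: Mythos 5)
Your proposal is correct and follows essentially the same route as the paper: reduce the pentagonal number series modulo $2$, multiply \eqref{lem1.1} through by $(q;q)_\infty$, and compare coefficients of $q^n$. The only difference is that you explicitly verify that $k\mapsto \tfrac{t}{2}\bigl(mk^{2}-(m-2)k\bigr)$ is injective on the nonnegative integers, so the right-hand side is a genuine $0$--$1$ indicator; the paper leaves this point implicit, and your remark is a worthwhile (and correct) addition.
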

\begin{proof}
By \eqref{Pentagonal}, we have 
\begin{align}\label{lem3.l}
(q; q)_{\infty}\equiv\sum_{n=0}^{\infty}q^{\frac{n}{2}(3n-1)}+\sum_{n=1}^{\infty}q^{\frac{n}{2}(3n+1)}\pmod2.
\end{align}
Employing \eqref{lem3.l} in \eqref{lem1.1}, we obtain
\begin{align*}
\sum_{n=0}^{\infty}{p_{mt,t}(n)q^{n}}  &= \frac{1}{(q;q)_{\infty}}\sum_{n=0}^{\infty} (-1)^{n}q^{\frac{1}{2}(mn^{2}-(m-2)n)t}\\
 & \equiv \frac{1}{(q;q)_{\infty}}\sum_{n=0}^{\infty}q^{\frac{1}{2}(mn^{2}-(m-2)n)t} \pmod{2}\\
& \equiv \frac{\sum_{n=0}^{\infty} q^{\frac{1}{2}(mn^{2}-(m-2)n)t}}{\sum_{n=0}^{\infty} {q^{\frac{n(3n-1)}{2}}+\sum_{n=1}^{\infty}} {q^{\frac{n(3n+1)}{2}}}} \pmod{2}.
\end{align*}
Hence,
\begin{align*}
&\sum_{n=0}^{\infty}\left(\sum_{s=0}^{\infty}{p_{mt,t}\left(n-\frac{s(3s-1)}{2}\right)} + \sum_{s=1}^{\infty}{p_{mt,t}\left(n-\frac{s(3s+1)}{2}\right)}\right)q^n\\
& \equiv \sum _{n=0}^{\infty}q^{\frac{1}{2}(mn^{2}-(m-2)n)t} \pmod{2}.
\end{align*}
Now, for any non-negative integer $n$,  comparing the coefficients of $q^n$ on both sides of the above congruence yields the required expression.
\end{proof}
\begin{lemma}\label{lem5}
Let $t, m, r$ be positive integers with $r\geq 2$. If $r(3r-1)$ is not of the form $(mk^{2}-(m-2)k)t$ for any positive integer $k$, then there exists an integer $n \in [2r-1,\frac{r(3r-1)}{2}]$ such that $p_{mt,t}(n)$ is odd.
\end{lemma}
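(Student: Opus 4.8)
The plan is to apply Lemma~\ref{lem3} at the single value $n = \frac{r(3r-1)}{2}$. Write $u_k := \frac{k(3k-1)}{2}$ for $k\in\mathbb{Z}$, so that the generalized pentagonal numbers $\frac{s(3s-1)}{2}$ ($s\ge 0$) and $\frac{s(3s+1)}{2}$ ($s\ge 1$) appearing in Lemma~\ref{lem3} are precisely the $u_k$, $k\in\mathbb{Z}$, and the chosen value is $n=u_r$. The hypothesis that $r(3r-1)$ is not of the form $(mk^2-(m-2)k)t$ for any positive integer $k$ says exactly that $u_r=\frac{r(3r-1)}{2}$ is not equal to $\frac12(mk^2-(m-2)k)t$ for any positive integer $k$ (the value $k=0$ is irrelevant, since it gives $0\ne u_r$ as $r\ge 2$). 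Hence the right-hand side of the congruence in Lemma~\ref{lem3}, evaluated at $n=u_r$, is $0\pmod 2$.

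Next I would unwind the left-hand side as a sum of terms $p_{mt,t}(u_r-u_k)$ with $k$ ranging over the integers satisfying $u_k\le u_r$ (larger $u_k$ contribute $0$). The term $k=r$ contributes $p_{mt,t}(0)=1$, which is nonzero because the empty partition is always counted, as one sees from the constant term of the generating function in Lemma~\ref{lem1}. Every remaining term with nonzero argument has that argument in the interval $\bigl[2r-1,\frac{r(3r-1)}{2}\bigr]$: an elementary computation shows $u_r$ and $u_{-(r-1)}$ are consecutive generalized pentagonal numbers with $u_r-u_{-(r-1)}=2r-1$, so every $u_k<u_r$ gives $u_r-u_k\ge 2r-1$, while the extreme case $k=0$ gives $u_r-u_0=u_r=\frac{r(3r-1)}{2}$. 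Therefore Lemma~\ref{lem3} becomes
\[
1+\sum_n p_{mt,t}(n)\equiv 0\pmod 2,
\]
where the sum runs over a nonempty subset of integers $n\in\bigl[2r-1,\frac{r(3r-1)}{2}\bigr]$. Consequently $\sum_n p_{mt,t}(n)$ is odd, so at least one summand $p_{mt,t}(n)$ with $n\in\bigl[2r-1,\frac{r(3r-1)}{2}\bigr]$ is odd, which is the assertion.

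There is no deep obstacle; the work is entirely elementary bookkeeping. The one point to check carefully is the gap identity $u_r-u_{-(r-1)}=2r-1$, which guarantees that no term with argument strictly between $1$ and $2r-2$ survives, so the surviving arguments all lie in $\bigl[2r-1,\frac{r(3r-1)}{2}\bigr]$; combined with $p_{mt,t}(0)=1$, this is precisely what forces an odd value in the stated range. It is also worth spot-checking the boundary case $r=2$ (where $u_2=5$ and $2r-1=3$, so the surviving arguments are $\{3,4,5\}$) to confirm the interval is nonempty and the $0$-argument term is genuinely separated from it, which holds for all $r\ge 2$.
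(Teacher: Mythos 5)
Your proof is correct and follows essentially the same route as the paper's: both evaluate Lemma~\ref{lem3} at $n=\frac{r(3r-1)}{2}$ (where the hypothesis forces the right-hand side to be $0\pmod 2$), isolate the term $p_{mt,t}(0)=1$, and show that every other surviving argument lies in $\bigl[2r-1,\frac{r(3r-1)}{2}\bigr]$. The only cosmetic differences are that you argue directly rather than by contradiction, and you justify the lower bound $2r-1$ via the gap between the consecutive generalized pentagonal numbers $u_{-(r-1)}$ and $u_r$ instead of the paper's explicit computation of $\mathcal{S}(r-b)$.
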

\begin{proof}
 We prove the lemma by using the method of contradiction. We consider $r \geq 2$ such that $r(3r-1)$ is not of the form $ (mk^{2}-(m-2)k)t$ for any $k\in \mathbb{N}$. If possible let $p_{mt,t}(n)$ is even for any $n\in [2r-1, \frac{r(3r-1)}{2}]$.
\par For an integer $a$, let 
\begin{align*}
\mathcal{S}(a)&:= \frac{r(3r-1)}{2} - \frac{k(3a-1)}{2},\\
\mathcal{R}(a)&:= \frac{r(3r-1)}{2}- \frac{a(3a+1)}{2}.
\end{align*}
By Lemma \ref{lem3}, we have
\begin{align}\label{lem5.1}
\sum_{a=0}^{\infty}{p_{mt,t}(\mathcal{S}(a))} + \sum_{a=1}^{\infty}{p_{mt,t}(\mathcal{R}(a))}\equiv 0\pmod{2}.
\end{align}
It is easy to check that  $\mathcal{S}(a)<0$, if $a\geq r+1$ and for $a\geq r$, $\mathcal{R}(a)<0$. We adopt the convention that $p_{mt,t}(n)=0$ when $n$ is a negative integer. Hence, by \eqref{lem5.1}, we can truncate the series into finite sums of the form 
\begin{align}\label{lem5.2}
&\sum_{a=0}^{r}{p_{mt,t}(\mathcal{S}(a))} + \sum_{a=1}^{r-1}{p_{mt,t}(\mathcal{R}(a))}
= 1+ \sum_{a=0}^{r-1}{p_{mt,t}(\mathcal{S}(a))} + \sum_{a=1}^{r-1}{p_{mt,t}(\mathcal{R}(a))}.
\end{align}
Now, for any fixed positive integer $r\geq 2$, $\mathcal{S}(a)$ is a decreasing function of $a$. We note that $\mathcal{S}(0)=\frac{r(3r-1)}{2}$ and 
$$\mathcal{S}(r-b)= \frac{1}{2}(6rb-3b^2 -b)= \frac{3b}{2}\left(2r-\left(b+\frac{1}{3}\right)\right)\geq 2r-1, $$
where $b\in  \{1,2,\ldots, r-1\}$. Hence, $\mathcal{S}(a) \in [2r-1,\frac{r(3r-1)}{2}]$ for $a\in  \{0,1,\ldots, r-1\}$.
Similarly, we can show that $\mathcal{R}(a) \in [2r-1,\frac{r(3r-1)}{2}]$ for each $a\in  \{1,2,\ldots, r-1\}$. By our assumption $p_{mt,t}(n)$ is even when $n\in [2r-1,\frac{r(3r-1)}{2}] $. Hence, $\sum_{a=0}^{r-1}{p_{mt,t}(\mathcal{S}(a))}$ and $\sum_{a=1}^{r-1}{p_{mt,t}(\mathcal{R}(a))}$ are even numbers and consequently, the summation \eqref{lem5.2} is an odd number, which is a contradiction to \eqref{lem5.1}. This completes the proof of the lemma.
\end{proof} 
In Lemma \ref{lem5}, we have seen that if $r(3r-1)$ is not of the form $(mk^{2}-(m-2)k)t$ for any positive integer $k$, then there exists an integer $n \in [2r-1,\frac{r(3r-1)}{2}]$ such that $p_{mt,t}(n)$ is odd. In the following lemma, we prove that, if $r \equiv  2\pmod{3}$ then $r(3r-1)\neq (mk^{2}-(m-2)k)t$ for any positive integers $m, k$ with $m \not \equiv 0 \pmod{3}$ and any prime $p\equiv 1\pmod{3}$.
\begin{lemma}\label{lemma-new}
	Let $s$ be a positive integer such that $s \equiv  2\pmod{3}$. Then, $s(3s-1)$ is not of the form $(mk^{2} -(m-2)k)p$ for any positive integers $m, k$ with $m \not \equiv 0 \pmod{3}$ and any prime $p\equiv 1\pmod{3}$.
\end{lemma}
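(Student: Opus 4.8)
The plan is to pass to congruences modulo $3$ on both sides of the putative equation $s(3s-1)=(mk^{2}-(m-2)k)p$. First I would record the residue of the left-hand side: since $s\equiv 2\pmod 3$, we have $s(3s-1)=3s^{2}-s\equiv -s\equiv 1\pmod 3$, so any integer of the form $s(3s-1)$ with $s\equiv 2\pmod 3$ is $\equiv 1\pmod 3$.

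Next I would analyze the right-hand side modulo $3$ under the stated hypotheses. Because $p\equiv 1\pmod 3$, we have $(mk^{2}-(m-2)k)p\equiv mk^{2}-(m-2)k\pmod 3$, and rewriting $mk^{2}-(m-2)k=m\,k(k-1)+2k$ isolates the $k$-dependence, since $k(k-1)$ is divisible by $3$ unless $k\equiv 2\pmod 3$, in which case $k(k-1)\equiv 2\pmod 3$. A short case check over the residue of $k$ modulo $3$ then shows that $m\,k(k-1)+2k$ is $\equiv 0\pmod 3$ when $k\equiv 0$, $\equiv 2\pmod 3$ when $k\equiv 1$, and $\equiv 2m+1\pmod 3$ when $k\equiv 2$; in this last case $2m+1\equiv 0\pmod 3$ if $m\equiv 1\pmod 3$ and $2m+1\equiv 2\pmod 3$ if $m\equiv 2\pmod 3$. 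Hence, for all positive integers $m,k$ with $m\not\equiv 0\pmod 3$ and all primes $p\equiv 1\pmod 3$, the integer $(mk^{2}-(m-2)k)p$ is $\equiv 0$ or $2\pmod 3$, never $\equiv 1\pmod 3$.

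Comparing the two computations, the left-hand side is $\equiv 1\pmod 3$ while the right-hand side is $\not\equiv 1\pmod 3$ for every admissible $m,k,p$, so the equation has no solution, which is the claim. There is no deep obstacle here; the only points requiring attention are making the case analysis over $k$ (and, in the subcase $k\equiv 2\pmod 3$, over $m$) exhaustive, and noting that the hypotheses $m\not\equiv 0\pmod 3$ and $p\equiv 1\pmod 3$ are both genuinely used --- indeed $m\equiv 0\pmod 3$ together with $k\equiv 2\pmod 3$, or a prime $p\equiv 2\pmod 3$, would each permit the residue $1$ on the right.
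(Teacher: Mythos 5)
Your proof is correct, and it is genuinely more direct than the one in the paper. You reduce the putative identity $s(3s-1)=(mk^{2}-(m-2)k)p$ modulo $3$ immediately: the left side is $-s\equiv 1\pmod 3$, while a case check on $k\bmod 3$ (and on $m\bmod 3$ in the subcase $k\equiv 2$) shows $mk^{2}-(m-2)k\equiv mk(k-1)+2k$ only takes the values $0$ and $2$ modulo $3$ when $m\not\equiv 0\pmod 3$, and multiplying by $p\equiv 1\pmod 3$ preserves this. The paper instead argues through the quadratic in $k$: it requires the discriminant $(m-2)^{2}p^{2}+4mps(3s-1)$ to be a perfect square, extracts an auxiliary integer $\nu$ with $\nu((m-2)p+\nu)=mps(3s-1)$, writes $\nu=pu$, and only then reduces $u(u+m-2)\equiv 2ms\pmod 3$ to reach the same contradiction by cases. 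Both proofs are valid and both are ultimately mod-$3$ obstructions, but yours avoids the detour through the discriminant and the auxiliary variable entirely, which makes it shorter and easier to verify; your closing observation that both hypotheses $m\not\equiv 0\pmod 3$ and $p\equiv 1\pmod 3$ are sharp (each failure mode lets the residue $1$ reappear on the right) is a nice bonus that the paper does not record.
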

\begin{proof}
We prove the lemma by the method of contradiction. For a given positive integer $s \equiv  2\pmod{3}$, suppose that $s(3s-1) = pmk^{2}-(m-2)kp$ for some positive integers $k, m$ and prime $p$ with $m \not \equiv 0 \pmod{3}$ and $p\equiv 1\pmod{3}$. Then, $(m-2)^{2}p^{2} + 4mps(3s-1)$ must be a square of an integer. Hence, there exists a positive integer $\nu$ such that 
\begin{align}\label{thm5.1}
\nu((m-2)p+\nu)= mps(3s-1).
\end{align}
 Observe that $ p|\nu$ and hence, $\nu= pu $ for some $u\in \mathbb{N}$. Thus, \eqref{thm5.1} yields $p^{2}u(u+m-2)=mps(3s-1)$. Since $p\equiv 1\pmod3$, we have
\begin{align}\label{thm5.2}
 u(u+m-2)\equiv 2ms\pmod{3}.
\end{align}
Now, if $m \equiv 1 \pmod{3}$ then $u(u-1) \equiv 2s\pmod{3}$.
 If $u \equiv 1\pmod{3}$, then $u-1 \equiv 0\pmod{3}$ and hence, $u(u-1)\equiv 0\pmod{3}$, which contradicts the fact \eqref{thm5.2}, as $s\equiv 2\pmod3$. And if $ u\equiv 2\pmod{3}$ then $u-1 \equiv 1\pmod{3}$ then we have $u(u-1) \equiv 2\pmod{3}$ which again gives a contradiction to the fact \eqref{thm5.2}. Again if $m \equiv 2 \pmod{3}$ then $u^{2} \equiv s \pmod{3}$  as $ 3 \not| \nu$, $u^{2} \equiv 1\pmod{3}$, which gives a contradiction to  \eqref{thm5.2}. This completes the proof of the lemma.
\end{proof} 
Combining Lemma \ref{lem5} and Lemma \ref{lemma-new}, we readily obtain that, if $m$ is a positive integer and $p$ is a prime such that $m \not \equiv 0 \pmod{3}$ and $p\equiv 1\pmod{3}$, then $p_{mp, p}(n)$ is odd for infinitely many integer $n$. We now prove Theorem \ref{thm5} which gives a quantitative estimate for the distribution of odd values of $p_{mt,t}(n)$ when $m \not \equiv 0 \pmod{3}$ and $p\equiv 1\pmod{3}$.
\begin{proof}[Proof of Theorem \ref{thm5}]
To prove our theorem, we use Lemmas \ref{lem5} and \ref{lemma-new}. Let $n$ be a positive integer. We want to count the number of elements in the set 
$$ \{ 1\leq n \leq X : p_{mp,p}(n)~~ \text{ is an odd integer} \}.$$
  We next define $a_{k}$, for $k\geq 0$, recursively as follows. 
\begin{align}\label{thm5.3}
 a_{0}:= s~~ \text{and }~~a_{k}:= \frac{1}{2}a_{k-1}(3a_{k-1}-1),~~ \text{for} ~~k\in\mathbb{N}.
 \end{align}
We note that for all non-negative integers $k$, $a_{k}\equiv 2\pmod{3}$ and $a_{k}$ is a strictly increasing sequence, where $a_{k+1}- (2a_{k} -1) \geq 2$. We now partition the interval $[1,X]$ as follows.
$$ [1,X]= [1,a_{1})\cup[a_{1},a_{2})\cup \cdots\cup [a_{\nu},X],$$
where $\nu$ is the largest integer such that $a_{\nu} \leq X$. By Lemma \ref{lemma-new}, we obtain that for a fixed positive integer $s$ with $s \equiv  2\pmod{3}$, $s(3s-1) \not= pmk^{2}-(m-2)kp$ for any positive integers $m,k$ such that $m\not\equiv 0\pmod3$ and any prime $p\equiv 1\pmod3$. Hence, by Lemma \ref{lem5}, we can find a positive integer $n\in [2a_{k}-1, a_{k+1}] \subset [a_{k},a_{k+1}]$ for which $p_{mp,p}(n)$ is an odd integer. Then the number of $n\leq X$ for which $p_{mp,p}(n)$ is an odd integer is atleast $\lfloor \frac{\nu}{2}\rfloor$.  Now, for all $k\geq 0$, we have
$$ a_{k} = \frac{a_{k-1}(3a_{k-1}-1)}{2}\leq \frac{3}{2}a_{k-1}^{2}\leq \cdots\leq \left(\frac{3}{2}\right)^{2^{k-1}-1}\leq 2^{2^{k}}.$$
Since $a_{\nu}\leq X\leq a_{\nu+1}$, we find that $\nu \geq \beta \log\log X$ for some constant $\beta>0$. This completes the proof of the theorem.
\end{proof} 
\section{Mex-related partitions and relations to ordinary partition}\label{final-part}
Let $p(n)$ denote the ordinary partition function. We adopt the convention that $p(n)=0$ when $n$ is a negative integer. In the following theorem, we express $p_{mt,t}(n)$ in terms of $p(n)$.
 \begin{theorem}\label{thm3}
 	Let $t, m$ be positive integers. Then, for all non-negative integers $n$, we have
 	\begin{align}\label{thm3.1}
 	p_{mt,t}(n)= p(n)+\sum_{r=1}^{\infty}p(n-tr(2rm-m+2))-\sum_{s=1}^{\infty}p(n-t(2s-1)(sm-m+1)).
 	\end{align}
 \end{theorem}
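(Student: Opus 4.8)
\textbf{Proof proposal for Theorem \ref{thm3}.}
The plan is to read off the identity directly from the generating function in Lemma \ref{lem1} by separating the theta-like sum according to the parity of its index and then multiplying by the generating function $1/(q;q)_\infty=\sum_{n\ge 0}p(n)q^n$. Concretely, I would start from
\begin{align*}
\sum_{n=0}^{\infty}p_{mt,t}(n)q^{n}=\frac{1}{(q;q)_{\infty}}\sum_{n=0}^{\infty}(-1)^{n}q^{\frac{1}{2}(mn^{2}-(m-2)n)t},
\end{align*}
and split the inner sum into its even-index part ($n=2r$, $r\ge 0$) and odd-index part ($n=2s-1$, $s\ge 1$). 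For the even indices the exponent becomes $\tfrac12\bigl(m(2r)^2-(m-2)(2r)\bigr)t=tr(2rm-m+2)$, and the $r=0$ term contributes the pure constant $1$. For the odd indices the exponent becomes $\tfrac12\bigl(m(2s-1)^2-(m-2)(2s-1)\bigr)t$, and after factoring out $(2s-1)$ this simplifies to $t(2s-1)(sm-m+1)$, carrying the sign $(-1)^{2s-1}=-1$.

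Having done this bookkeeping, the inner sum equals
\begin{align*}
1+\sum_{r=1}^{\infty}q^{tr(2rm-m+2)}-\sum_{s=1}^{\infty}q^{t(2s-1)(sm-m+1)}.
\end{align*}
Multiplying this through by $\sum_{n\ge 0}p(n)q^n$ and comparing the coefficient of $q^n$ on both sides, using the convention $p(n)=0$ for $n<0$, yields exactly \eqref{thm3.1}. One small point worth a remark is that for each fixed $n$ the sums over $r$ and $s$ in \eqref{thm3.1} are in fact finite, since the exponents $tr(2rm-m+2)$ and $t(2s-1)(sm-m+1)$ grow quadratically in $r$ and $s$ respectively, so once they exceed $n$ the corresponding $p$-values vanish; hence the displayed infinite series are well defined.

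I do not expect any genuine obstacle here: the argument is a routine coefficient comparison. The only place requiring care is the parity split of the index $n$ in the theta-like sum together with the algebraic simplification of the two families of exponents, where it is easy to misplace a factor of $2$ or the sign; I would therefore verify both exponent computations explicitly (and sanity-check against the known cases $m=1,2$, which must reduce to \eqref{gen-fun} and \eqref{gen-fun1}) before concluding.

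\begin{proof}
By Lemma \ref{lem1} and the identity $\dfrac{1}{(q;q)_\infty}=\displaystyle\sum_{n=0}^{\infty}p(n)q^n$, we have
\begin{align}\label{thm3.2}
\sum_{n=0}^{\infty}p_{mt,t}(n)q^{n}=\left(\sum_{n=0}^{\infty}p(n)q^{n}\right)\sum_{n=0}^{\infty}(-1)^{n}q^{\frac{1}{2}(mn^{2}-(m-2)n)t}.
\end{align}
We split the sum $\displaystyle\sum_{n=0}^{\infty}(-1)^{n}q^{\frac{1}{2}(mn^{2}-(m-2)n)t}$ according to the parity of $n$. Writing $n=2r$ with $r\geq 0$, the exponent equals
\begin{align*}
\frac{1}{2}\bigl(m(2r)^{2}-(m-2)(2r)\bigr)t=\bigl(2mr^{2}-(m-2)r\bigr)t=tr(2rm-m+2),
\end{align*}
and the sign is $(-1)^{2r}=1$; the term $r=0$ contributes $1$. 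Writing $n=2s-1$ with $s\geq 1$, the exponent equals
\begin{align*}
\frac{1}{2}\bigl(m(2s-1)^{2}-(m-2)(2s-1)\bigr)t=\frac{1}{2}(2s-1)\bigl(2ms-2m+2\bigr)t=t(2s-1)(sm-m+1),
\end{align*}
and the sign is $(-1)^{2s-1}=-1$. Hence
\begin{align}\label{thm3.3}
\sum_{n=0}^{\infty}(-1)^{n}q^{\frac{1}{2}(mn^{2}-(m-2)n)t}=1+\sum_{r=1}^{\infty}q^{tr(2rm-m+2)}-\sum_{s=1}^{\infty}q^{t(2s-1)(sm-m+1)}.
\end{align}
Substituting \eqref{thm3.3} into \eqref{thm3.2} gives
\begin{align*}
\sum_{n=0}^{\infty}p_{mt,t}(n)q^{n}=\sum_{n=0}^{\infty}p(n)q^{n}+\sum_{r=1}^{\infty}\sum_{n=0}^{\infty}p(n)q^{n+tr(2rm-m+2)}-\sum_{s=1}^{\infty}\sum_{n=0}^{\infty}p(n)q^{n+t(2s-1)(sm-m+1)}.
\end{align*}
Comparing the coefficients of $q^n$ on both sides, and using the convention that $p(n)=0$ for negative integers $n$ (which also guarantees that for each fixed $n$ only finitely many terms of the sums over $r$ and $s$ are nonzero, since the exponents grow quadratically), we obtain
\begin{align*}
p_{mt,t}(n)=p(n)+\sum_{r=1}^{\infty}p(n-tr(2rm-m+2))-\sum_{s=1}^{\infty}p(n-t(2s-1)(sm-m+1)),
\end{align*}
as claimed.
\end{proof}
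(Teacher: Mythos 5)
Your proof is correct and follows essentially the same route as the paper: both start from Lemma \ref{lem1}, rewrite the alternating sum as $1+\sum_{r\geq 1}q^{tr(2rm-m+2)}-\sum_{s\geq 1}q^{t(2s-1)(sm-m+1)}$ by splitting on the parity of the index, multiply by $\sum p(n)q^n$, and compare coefficients. Your version merely makes the parity split and the exponent simplifications explicit, which the paper leaves implicit.
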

 \begin{proof}
 	The generating function for the partition function $p(n)$ is given by
 	\begin{align*}
 	\sum_{n=0}^{\infty} p(n) q^{n}=\frac{1}{(q ; q)_{\infty}}.
 	\end{align*}
 	From \eqref{lem1.1}, we obtain that
 	\begin{align*}
 	&\sum_{n=0}^{\infty}p_{mt,t}(n)q^n\\
 	&=\frac{1}{(q; q)_{\infty}}\sum_{n=0}^{\infty}(-1)^nq^{\frac{1}{2}(mn^{2}-(m-2)n)t}\\
 	&=\left(\sum_{n=0}^{\infty} p(n) q^{n}\right)\left(1+\sum_{r=1}^{\infty} q^{tr(2rm-m+2)}-\sum_{s=1}^{\infty} q^{t(2s-1)(sm-m+1)}\right)\\
 	&=\sum_{n=0}^{\infty}\left(p(n)+\sum_{r=1}^{\infty}p(n-tr(2rm-m+2))-\sum_{s=1}^{\infty}p(n-t(2s-1)(sm-m+1))\right)q^n.
 	\end{align*}
 	Thus, for all non-negative integers $n$, we have 
 	\begin{align}\label{eqn-NEW-1}
 	p_{mt,t}(n)= p(n)+\sum_{r=1}^{\infty}p(n-tr(2rm-m+2))-\sum_{s=1}^{\infty}p(n-t(2s-1)(sm-m+1)).
 	\end{align}  
 	This completes the proof of the theorem.	
 \end{proof}
In the following theorem, we prove that $p_{mt,t}(n)$ satisfies Ramanujan-type congruences, and these congruences follow from those satisfied by the ordinary partition function $p(n)$.  
 \begin{theorem}\label{thm4}
 	Let $k,m, a\geq 1$ and $b$ be integers. Suppose  that $p(an+b)\equiv 0 \pmod k$ for all non-negative integers $n$. Then, for all $t\geq 1$, we have 
 	\begin{align*}
 	p_{mat,at}(an+b)\equiv 0 \pmod k
 	\end{align*}
 	for all non-negative integers $n$.
 \end{theorem}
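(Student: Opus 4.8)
The plan is to read this off directly from Theorem~\ref{thm3}. Applying the identity \eqref{thm3.1} with $t$ replaced by $at$ and with the same $m$ (so that the subscript $mt,t$ becomes $m(at),(at)=mat,at$), I obtain, for every non-negative integer $N$,
\begin{align*}
p_{mat,at}(N)= p(N)+\sum_{r=1}^{\infty}p\bigl(N-at\,r(2rm-m+2)\bigr)-\sum_{s=1}^{\infty}p\bigl(N-at(2s-1)(sm-m+1)\bigr),
\end{align*}
the sums being finite because $p$ vanishes on negative integers.

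Next I would specialize $N=an+b$ and rewrite each argument on the right-hand side. The crucial observation is that every shift appearing there, namely $at\,r(2rm-m+2)$ and $at(2s-1)(sm-m+1)$, is a multiple of $a$; hence
$$an+b-at\,r(2rm-m+2)=a\bigl(n-t\,r(2rm-m+2)\bigr)+b,$$
and similarly $an+b-at(2s-1)(sm-m+1)=a\bigl(n-t(2s-1)(sm-m+1)\bigr)+b$. Thus every term on the right-hand side is of the shape $p(aj+b)$ for some integer $j$, possibly negative.

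The final step is a one-line case check on each such term: if $j\geq 0$ then $p(aj+b)\equiv 0\pmod k$ by hypothesis, while if $j<0$ then $p(aj+b)=0$ by the stated convention, so in either case $p(aj+b)\equiv 0\pmod k$. Applying this to $p(an+b)$ itself and to each term in the two sums, the entire right-hand side is $\equiv 0\pmod k$, and therefore $p_{mat,at}(an+b)\equiv 0\pmod k$ for all non-negative integers $n$.

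I do not expect any genuine obstacle here; the only point requiring attention is precisely the one highlighted above, that the factor $a$ inside $at$ forces every shift in \eqref{thm3.1} to be divisible by $a$, which is exactly why the statement is phrased for $p_{mat,at}$ rather than an arbitrary $p_{mt,t}$. As an immediate application, feeding in Ramanujan's congruences $p(5n+4)\equiv 0\pmod 5$, $p(7n+5)\equiv 0\pmod 7$, $p(11n+6)\equiv 0\pmod{11}$ yields $p_{5mt,5t}(5n+4)\equiv 0\pmod 5$, $p_{7mt,7t}(7n+5)\equiv 0\pmod 7$, and $p_{11mt,11t}(11n+6)\equiv 0\pmod{11}$ for all $m,t\geq 1$.
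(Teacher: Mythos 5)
Your proposal is correct and follows essentially the same route as the paper: substitute $t\mapsto at$ in the identity \eqref{thm3.1}, observe that every shift is then a multiple of $a$ so each term has the form $p(aj+b)$, and conclude term by term using the hypothesis for $j\geq 0$ and the vanishing convention for $j<0$. No issues.
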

 \begin{proof}
 Let $n\geq 0$. From \eqref{thm3.1}, we obtain 
 	\begin{align}\label{eqn-NEW-2}
 	p_{mat,at}(an+b)&=p(an+b)+\sum_{r=1}^{\infty}p(a(n-tr(2rm-m+2))+b)\notag\\
 	&-\sum_{s=1}^{\infty}p(a(n-t(2s-1)(sm-m+1))+b).
 	\end{align}
 	We note that the terms remaining in the sums in \eqref{eqn-NEW-1} satisfy that $n-tr(2rm-m+2)$ and $n-t(2s-1)(sm-m+1)$ are non-negative. Hence, the same is true in \eqref{eqn-NEW-2}. Now, if $p(\ell a+b)\equiv 0\pmod{k}$ for every non-negative integer $\ell$, then \eqref{eqn-NEW-2} yields that $p_{mat, at}(an+b)\equiv 0\pmod{k}$. This completes the proof. 
 \end{proof}
 As an application of Theorem \ref{thm4}, we find that $p_{mt,t}(n)$ satisfies the Ramanujan's famous congruences for certain infinite families of $t$ and $m$. Much to Ramanujan's credit, the ``Ramanujan congruences'' for $p(n)$ are given below. If $k\geq 1$, then for every non-negative integer $n$, we have 
 \begin{align*}
 p\left(5^{k} n+\delta_{5, k}\right) & \equiv 0 \pmod {5^{k}}; \\
 p\left(7^{k} n+\delta_{7, k}\right) & \equiv 0 \pmod {7^{[k / 2]+1}}; \\
 p\left(11^{k} n+\delta_{11, k}\right) & \equiv 0 \pmod{ 11^{k}},
 \end{align*}
 where $\delta_{p, k}:=1/24 \bmod {p^k}$ for $p= 5, 7,11$.  In the following, we prove that $p_{mat,at}(n)$ satisfy the Ramanujan congruences when $a=5^k, 7^k, 11^k$.
 \begin{corollary}\label{ramanujan-cong}
 For all $k, t\geq 1, m\geq1$, and for every non-negative integer $n$, we have  
 \begin{align*}
 p_{m\cdot5^{k}t,5^{k}t}\left(5^{k} n+\delta_{5, k}\right) & \equiv 0 \pmod {5^{k}}; \\
 p_{m\cdot7^{k}t,7^{k}t}\left(7^{k} n+\delta_{7, k}\right) & \equiv 0 \pmod {7^{[k / 2]+1}}; \\
 p_{m\cdot11^{k}t,11^{k}t}\left(11^{k} n+\delta_{11, k}\right) & \equiv 0 \pmod{ 11^{k}}.\\
 \end{align*}
 \end{corollary}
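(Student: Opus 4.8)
The plan is to read off Corollary \ref{ramanujan-cong} directly from Theorem \ref{thm4}, feeding in the classical Ramanujan congruences for $p(n)$ as the hypothesis. Recall that Theorem \ref{thm4} asserts: if $a\geq 1$, $m\geq 1$, and $p(an+b)\equiv 0\pmod{\kappa}$ for every non-negative integer $n$ (I write $\kappa$ for the modulus here to avoid clashing with the exponent $k$ in the corollary), then $p_{mat,at}(an+b)\equiv 0\pmod{\kappa}$ for all $n\geq 0$ and all $t\geq 1$. So the only ingredient to supply is an arithmetic progression $an+b$ on which $p$ is known to vanish modulo a prescribed modulus, which is exactly what Ramanujan's congruences provide.

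For the first family I would take $a=5^{k}$, $b=\delta_{5,k}$, and $\kappa=5^{k}$; the hypothesis of Theorem \ref{thm4} is then precisely $p\left(5^{k}n+\delta_{5,k}\right)\equiv 0\pmod{5^{k}}$, so its conclusion gives $p_{m\cdot 5^{k}t,\,5^{k}t}\left(5^{k}n+\delta_{5,k}\right)\equiv 0\pmod{5^{k}}$ for all $t\geq 1$, $m\geq 1$, and $n\geq 0$. The remaining two families are handled identically: apply Theorem \ref{thm4} with $(a,b,\kappa)=\bigl(7^{k},\delta_{7,k},7^{[k/2]+1}\bigr)$ and then with $(a,b,\kappa)=\bigl(11^{k},\delta_{11,k},11^{k}\bigr)$, invoking the Ramanujan congruences $p\left(7^{k}n+\delta_{7,k}\right)\equiv 0\pmod{7^{[k/2]+1}}$ and $p\left(11^{k}n+\delta_{11,k}\right)\equiv 0\pmod{11^{k}}$ respectively.

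There is essentially no genuine obstacle in this corollary: the substantive work has already been done in Theorem \ref{thm3} (the expansion of $p_{mt,t}(n)$ in terms of $p$) and in Theorem \ref{thm4} (the observation that the shifts $tr(2rm-m+2)$ and $t(2s-1)(sm-m+1)$ are multiples of $t$, so that replacing $t$ by $at$ forces every argument appearing in the expansion to lie in $a\mathbb{Z}+b$ whenever $n$ does). The one point worth flagging in the write-up is the renaming of the modulus variable, and perhaps a line noting that the parameter $t$ remains free in the corollary precisely because it is free in Theorem \ref{thm4}. Thus the proof will amount to three one-line invocations of Theorem \ref{thm4}.
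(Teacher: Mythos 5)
Your proposal is correct and coincides with the paper's own proof: the corollary is obtained by three direct applications of Theorem \ref{thm4}, taking $(a,b)=\bigl(5^{k},\delta_{5,k}\bigr)$, $\bigl(7^{k},\delta_{7,k}\bigr)$, $\bigl(11^{k},\delta_{11,k}\bigr)$ with the corresponding Ramanujan moduli. Your extra care in renaming the modulus variable to avoid clashing with the exponent $k$ is a sensible presentational touch, but there is no mathematical difference from the paper's argument.
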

\begin{proof}
Combining Ramanujan congruences for $p(n)$ and Theorem \ref{thm4} we readily obtain that $p_{mat,at}(n)$ satisfies the Ramanujan congruences when $a=5^k, 7^k, 11^k$. This completes the proof.
\end{proof}


\begin{thebibliography}{99}
\bibitem{ahlgren}
S. Ahlgren, {\it Distribution of parity of the partition function in arithmetic progressions}, Indag. Mathem. 12 (1999), 173--181.

\bibitem{Andrews-Newman}
G. E. Andrews and D. Newman, {\it The minimal excludant in integer partitions}, J. Integer Sequences 23 (2020), Article 20.2.3.

\bibitem{BS1}
R. Barman and A. Singh, {\it On Mex-related partition functions of Andrews and Newman}, Res. Number Theory 7 (2021), no. 3, Paper No. 53, 11 pp.

\bibitem{BS2}
R. Barman and A. Singh, {\it Mex-related partition functions of Andrews and Newman}, J. Integer Seq. 24 (2021), no. 6, Art. 21.6.3, 12 pp.

\bibitem{berndt}
B. C. Berndt, {\it Number theory in the Spirit of Ramanujan}, American Mathematical Society, providence, RI, 2006.

\bibitem{kolberg}
O. Kolberg, {\it Note on the parity of the partition function}, Math. Scand. 7 (1959), 377--378.

\bibitem{ray}
C. Ray, {\it Divisibility and distribution of mex-related integer partitions of Andrews and Newman}, Int. J. Number Theory 14 (2023) no. 03, 581--592.


\bibitem{sellers}
R. da Silva and J. A. Sellers, {\it Parity considerations for the Mex-related partition functions of Andrews and Newman}, J. Integer Sequences 23 (2020), Article 20.5.7.

\end{thebibliography}
\end{document}